\documentclass[11pt]{amsart}
\usepackage{graphicx}

\usepackage[active]{srcltx}
 \makeatletter
\renewcommand*\subjclass[2][2000]{%
  \def\@subjclass{#2}%
  \@ifundefined{subjclassname@#1}{%
    \ClassWarning{\@classname}{Unknown edition (#1) of Mathematics
      Subject Classification; using '1991'.}%
  }{%
    \@xp\let\@xp\subjclassname\csname subjclassname@#1\endcsname
  }%
}
 \makeatother
\usepackage{enumerate,url,amssymb,  mathrsfs,yhmath}

\newtheorem{theorem}{Theorem}[section]

\newtheorem*{lemma*}{Lemma}
\newtheorem{proposition}[theorem]{Proposition}

\def\1ton{1,2,\ldots,n}

\usepackage{amssymb}
\usepackage{amsthm}
\usepackage{mathrsfs, amsfonts, amsmath}
\usepackage{graphicx}
\usepackage{psfrag}
\usepackage{color}

\newcommand{\bydef}{\stackrel{{\rm def}}{=\!\!=}}

\theoremstyle{definition}

\theoremstyle{remark}

\numberwithin{equation}{section}









\def\XXint#1#2#3{{\setbox0=\hbox{$#1{#2#3}{\int}$}
\vcenter{\hbox{$#2#3$}}\kern-.5\wd0}}

\def\le{\leqslant}
\def\ge{\geqslant}
\setcounter{tocdepth}{1}
\begin{document}

\title{Total energy of radial mappings} \subjclass{Primary 31A05;
Secondary 42B30 }


\keywords{Total energy, Radial mappings, Annuli}
\author{Shaolin Chen}
\address{S. L. Chen, College of Mathematics and Statistics, Hengyang Normal University, Hengyang, Hunan 421008,
People's Republic of China.} \email{mathechen@126.com}

\author{David Kalaj}
\address{D. Kalaj, University of Montenegro, Faculty of Natural Sciences and
Mathematics, Cetinjski put b.b. 81000 Podgorica, Montenegro}
\email{davidk@ac.me}

\begin{abstract}

 We prove that, the so called total energy functional defined on the class
 of radial streachings between annuli attains its minimum on a total energy diffeomorphism between annuli on $\mathbf{R}^n$. This involves a subtle analysis of some special ODE. The result is an extension of the corresponding $2-$dimensional case obtained by Iwaniec and Onninen (Arch. Ration. Mech. Anal., 194: 927-986, 2009).
\end{abstract}  \maketitle


\section{Introduction}

\subsection{Total energy}

Assume that $h\in \mathscr{W}^{1,n}$ is a homeomorphism between two
annuli $\mathbb{A}=A(r,R)$ and $\mathbb{A}_\ast=A(r_*, R_*)$ of the
Euclidean space $\mathbf{R}^n$. Then the total energy of $h$ is
defined by Iwaniec and Onninen in \cite{arma} by the formula
$$\mathcal{E}[h]=\frac{\alpha}{|\mathbb{A}_\ast|}\int_{\mathbb{A}} \|Dh\|^n +\frac{\beta}{|\mathbb{A}|}\int_{\mathbb{A}_\ast}\|D{h^{-1}}\|^n,$$ $\alpha+\beta=1$, $
\alpha>0,$ $\beta>0$.
The functional $$h\rightarrow \int_{\mathbb{A}} \|Dh\|^n$$ is called the $n-$energy functional while the functional
$$h\rightarrow \int_{\mathbb{A}_\ast}\|D{h^{-1}}\|^n$$ is called the distortion functional.
We define a radial stretching $h$ as a mapping defined by a homeomorphism $H:[r,R]\mapsto [r_\ast, R_\ast]$ so that $$h(x)=H(|x|)\frac{x}{|x|}.$$

In \cite{arma}, Iwaniec and Onninen  showed that the minimum of
total energy for $n=2$ attained by a stretching diffeomorphism of
$\mathbb{A}$ onto $\mathbb{A}_\ast$. One of their key steps was to
solve the principal solution of so called equilibrium equation for
the radial mappings. It is the following boundary value problem
\begin{equation}\label{2dim}\left\{
  \begin{array}{ll}
    \ddot H=(H-t \dot H)\frac{(\alpha H\dot H+\beta t)\dot H^2}{(\alpha t \dot H^3+\beta t)tH} & \hbox{} \\
    H(r)=r_\ast,\ \ \ H(R)=R_\ast. & \hbox{}
  \end{array}
\right.\end{equation}

Namely they proved the following theorem.
\begin{theorem}$($\cite[Theorem 5.~1]{arma}$)$\label{main}
Given $R>r>0$ and $R_\ast>r_\ast>0$ there exists an unique strictly
increasing function $H\in \mathscr{C}^\infty[r,R]$ that solves the
equation \eqref{2dim} such that $H[r,R]=[r_\ast, R_\ast]$.
\end{theorem}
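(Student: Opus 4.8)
The plan is to read \eqref{2dim} as a two-point boundary value problem and to solve it by shooting, thereby reducing everything to a qualitative study of the associated initial value problem. Putting $p=\dot H$, the equation is the first-order system $\dot H=p$, $\dot p=F(t,H,p)$ on the open set $\Omega=\{(t,H,p):r\le t\le R,\ H>0,\ p>0\}$, with
\[
F(t,H,p)=(H-tp)\,\frac{(\alpha Hp+\beta t)\,p^{2}}{(\alpha t p^{3}+\beta t)\,tH}.
\]
On $\Omega$ the function $F$ is real-analytic (every denominator is strictly positive there, because $\alpha,\beta>0$). For each slope $s>0$ let $H_{s}$ be the maximal solution with $H_{s}(r)=r_{\ast}$ and $\dot H_{s}(r)=s$; the Picard--Lindel\"of theorem gives local existence, uniqueness and analytic dependence of $H_{s}$ on $(t,s)$, and analyticity of $F$ will upgrade the eventual solution to $\mathscr{C}^{\infty}[r,R]$ as required.

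First I would show that $\Omega$ is forward invariant and that no trajectory blows up before $t=R$. Near $p=0$ one has $F=p^{2}/t+O(p^{3})$, so $\{p=0\}$ is an invariant line and, by uniqueness, a trajectory with $p>0$ can never reach it; thus $H_{s}$ stays strictly increasing and $H_{s}\ge r_{\ast}>0$. To prevent blow-up I would track the scale-invariant ratio $q=t\dot H/H$. A direct computation gives $\sign\ddot H=\sign(H-t\dot H)=\sign(1-q)$ and, for large $q$, $\dot q=(1-q^{2})/t+\text{lower order}$, so $q$ is pushed back toward $1$ and stays bounded above by a constant depending only on $q(r)=rs/r_{\ast}$. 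Since $q\le M$ forces $\tfrac{d}{dt}\log H\le M/r$, the solution remains bounded inside $\Omega$ and hence extends to all of $[r,R]$ as a strictly increasing analytic function. Consequently the shooting map $\mathcal{S}(s):=H_{s}(R)$ is well defined and continuous on $(0,\infty)$.

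It remains to show that $\mathcal{S}$ hits $R_{\ast}$ exactly once. For surjectivity onto a range containing $R_{\ast}$: the solution for $s=0$ is the constant $H\equiv r_{\ast}$, so continuous dependence yields $\mathcal{S}(s)\to r_{\ast}$ as $s\to0^{+}$, while integrating the lower bound for $\dot H$ on a short initial interval gives $\mathcal{S}(s)\to+\infty$ as $s\to\infty$; the intermediate value theorem then furnishes a slope with $\mathcal{S}(s)=R_{\ast}$. Uniqueness is equivalent to $\mathcal{S}$ being strictly increasing, i.e. to the statement that two trajectories leaving $(r,r_{\ast})$ with different slopes never meet again. Writing $w:=\partial_{s}H_{s}$, this reduces to the variational equation $\ddot w=F_{H}\,w+F_{p}\,\dot w$ with $w(r)=0$, $\dot w(r)=1$, for which I must prove $w>0$ on $(r,R]$: were $w$ to vanish again, it would have an interior maximum with $w>0$, $\dot w=0$, $\ddot w\le0$, forcing $F_{H}\le0$ there.

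The main obstacle is exactly this sign condition. Writing $F=(H-tp)\,G$ with $G=\dfrac{(\alpha Hp+\beta t)\,p^{2}}{(\alpha t p^{3}+\beta t)\,tH}>0$, one finds $F_{H}=G$ precisely on the cone $H=t\dot H$, which is reassuring because the relation $\tfrac{d}{dt}(H-t\dot H)=-t\ddot H$ shows that $H-t\dot H$ is driven monotonically toward $0$, so trajectories concentrate near that cone; the delicate point, and the ``subtle analysis of some special ODE'' promised in the abstract, is to turn this heuristic into a proof that $F_{H}>0$ along the relevant trajectories (equivalently, that the linear operator $\frac{d^{2}}{dt^{2}}-F_{p}\frac{d}{dt}-F_{H}$ is disconjugate on $[r,R]$). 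In parallel I would try to sidestep the linearization by integrating \eqref{2dim} once: since it is the Euler--Lagrange equation of the one-dimensional energy obtained by inserting the radial ansatz $h(x)=H(|x|)x/|x|$ into $\mathcal{E}$, and since that Lagrangian is convex in the derivative $\dot H$, one may hope to reduce the problem to a first-order relation $\Phi(t,H,\dot H)=c$ whose solvability and strict monotonicity in $c$ would give existence and uniqueness simultaneously.
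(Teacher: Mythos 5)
Your existence half is essentially sound: the shooting setup, the invariance of $\{p=0\}$, the boundedness of $q=t\dot H/H$ (indeed one can compute exactly $\dot q=(1-q)\bigl(\dot H/H+tG\bigr)$ with $G>0$, so $q$ is monotone toward $1$ and $H-t\dot H$ keeps a fixed sign), and the intermediate value argument for $\mathcal{S}(s)=H_s(R)$ can all be made rigorous, although the limits $\mathcal{S}(s)\to r_\ast$ and $\mathcal{S}(s)\to\infty$ need more care than you give them. The genuine gap is in the uniqueness half, and you name it yourself: strict monotonicity of the shooting map is reduced to the sign condition $F_H>0$ along trajectories, and this is left unproven, deferred as ``the delicate point.'' As written, the proof therefore establishes existence but not uniqueness. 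Your fallback idea of integrating \eqref{2dim} once to a relation $\Phi(t,H,\dot H)=c$ does not rescue this: the Lagrangian depends explicitly on $t$, so there is no conserved quantity; the only available first-order relation, the analogue of \eqref{lcon}, involves the factor $M$ which itself depends on the unknown solution, so it yields sign information but not a first integral.

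The irony is that the step you flag as the main obstacle is a one-line computation, because $F$ is globally increasing in $H$ on $\{t>0,\,H>0,\,p>0\}$, not only on the cone $H=tp$. Since the factor $\alpha tp^3+\beta t$ does not involve $H$, write
\[
F(t,H,p)=\frac{p^{2}}{t(\alpha tp^{3}+\beta t)}\left[\alpha Hp+\beta t-\alpha tp^{2}-\frac{\beta t^{2}p}{H}\right],
\qquad
F_H=\frac{p^{2}}{t(\alpha tp^{3}+\beta t)}\left(\alpha p+\frac{\beta t^{2}p}{H^{2}}\right)>0 .
\]
With this, your maximum-principle argument for $w=\partial_s H_s$ goes through verbatim, $\mathcal{S}$ is strictly increasing, and the theorem follows; so your route can be completed, but the proposal as submitted does not complete it. Note also that this route is genuinely different from the paper's: there, the scaling substitution $t=H(s)/s$, $F(t)=\dot H(s(t))$ (see \eqref{prin} and \eqref{aux}) collapses the second-order equation to a first-order scalar ODE, whose distinct solution curves can never cross; uniqueness then comes for free from the ordering of the family $F_\lambda$ (Proposition~\ref{kprop}), with no variational equation or disconjugacy needed, and that is also the mechanism that survives in dimensions $n\ge 3$, where a direct verification of $F_H>0$ would be far messier.
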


Furthermore,  in one of their main results
(\cite[Theorem~1.4]{arma}), they proved that the mapping
$h(x)=H(|x|)\frac{x}{|x|}$ is the minimizer of the total energy
functional (for $n=2$).
 In \cite[Theorem~1.5]{arma}, they showed that this result cannot be extended to the Euclidean space $\mathbb{R}^n$ for $n\ge 4$, however the case $n=3$ remains an open problem.

In this paper, we extend Theorem~\ref{main} by proving the following
theorem.

\begin{theorem}\label{wienn}
Let $n\ge 3$. If $\mathcal{P}(A,A_*)$ is the family of radial mappings with finite total energy, then there is a  radial diffeomorphism $h=h_{\lambda_*}$ that minimizes
the  functional of total energy $\mathcal{E}:\mathcal{P}(A,A_*)\to \mathbf{R}$.
\end{theorem}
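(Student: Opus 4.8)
The plan is to reduce the minimization to a one--dimensional variational problem, solve it by the direct method, and then use a careful ODE analysis to promote the minimizer to a smooth diffeomorphism. First I would record that for a radial stretching $h(x)=H(|x|)x/|x|$ the differential $Dh$ has, at the point $x$ with $t=|x|$, the radial singular value $\dot H(t)$ and the tangential singular value $H(t)/t$ of multiplicity $n-1$. Adopting the Hilbert--Schmidt norm (the operator--norm case being entirely analogous), integrating in polar coordinates, and substituting $s=H(t)$ in the distortion term, the total energy reduces, up to the fixed positive constants $\alpha_0:=\alpha\,\omega_{n-1}/|\mathbb{A}_\ast|$ and $\beta_0:=\beta\,\omega_{n-1}/|\mathbb{A}|$ (with $\omega_{n-1}$ the area of the unit sphere), to a functional of a single scalar profile:
\begin{equation}\label{reduced}
\mathcal{E}[H]=\int_r^R\Big[\alpha_0\Big(\dot H^2+(n-1)\frac{H^2}{t^2}\Big)^{\frac n2}t^{\,n-1}+\beta_0\Big(\frac{1}{\dot H^2}+(n-1)\frac{t^2}{H^2}\Big)^{\frac n2}H^{\,n-1}\dot H\Big]\,dt .
\end{equation}
The class $\mathcal{P}(A,A_\ast)$ is then the set of absolutely continuous, nondecreasing $H$ with $H(r)=r_\ast$, $H(R)=R_\ast$ and $\mathcal{E}[H]<\infty$; note that finiteness of the second integrand already forces $\dot H>0$ almost everywhere.

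The structural fact that makes the direct method work is that the integrand $L(t,H,p)$ in \eqref{reduced} is convex in the velocity $p=\dot H$ on $(0,\infty)$. For the first term this is immediate, being the composition of the increasing convex function $\xi\mapsto\xi^{n/2}$ with $p\mapsto p^2+(n-1)H^2/t^2$. For the distortion term, writing $f(p)=p\,(p^{-2}+a)^{n/2}$ with $a=(n-1)t^2/H^2\ge0$, a direct computation gives
\begin{equation}\label{fpp}
f''(p)=n\,p^{-3}\,(p^{-2}+a)^{\frac n2-2}\big(a+(n-1)p^{-2}\big)>0\qquad(p>0,\ n\ge3),
\end{equation}
so $f$ is strictly convex. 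Granting this, existence follows by the standard scheme: $\mathcal{E}\ge0$, while the first term alone dominates $\alpha_0 r^{\,n-1}\int_r^R\dot H^n\,dt$, so any minimizing sequence is bounded in $\mathscr{W}^{1,n}[r,R]$; passing to a subsequence with $H_k\to H$ uniformly and $\dot H_k\rightharpoonup\dot H$ in $L^n$, the monotonicity and the two boundary values pass to the limit, and convexity of $L$ in $p$ yields weak lower semicontinuity, so the limit minimizes $\mathcal{E}$.

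It remains to show that this weak minimizer is a genuine strictly increasing $C^\infty$ diffeomorphism and to identify it as $h_{\lambda_\ast}$; this is where the subtle ODE analysis enters and where I expect the real difficulty to lie. Since $\mathcal{E}[H]<\infty$ gives $\dot H>0$ a.e., the minimizer is a weak solution of the Euler--Lagrange equation of \eqref{reduced}, the $n$--dimensional analogue of the equilibrium equation \eqref{2dim}; this is a second--order ODE that is singular precisely where $\dot H\to0$ or $\dot H\to\infty$. The work is to integrate it past these degeneracies: one must rule out $\dot H=0$ in the interior, obtain two--sided bounds $0<c\le\dot H\le C$, and bootstrap to smoothness up to $r$ and $R$. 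I would carry this out by a shooting argument: solve the equilibrium equation with $H(r)=r_\ast$ and prescribed slope $\dot H(r)=\lambda$, show the solution remains strictly increasing and exists on all of $[r,R]$, and prove that $\lambda\mapsto H(R;\lambda)$ is continuous and strictly monotone onto the admissible range. The unique $\lambda_\ast$ with $H(R;\lambda_\ast)=R_\ast$ then yields the increasing diffeomorphic solution $h_{\lambda_\ast}$ of the boundary value problem, and uniqueness of such a solution together with the regularity of the weak minimizer identifies $h_{\lambda_\ast}$ as the minimizer produced above. The main obstacle is thus the global control of this singular second--order ODE --- keeping $\dot H$ bounded away from $0$ and $\infty$ throughout $[r,R]$ and matching the boundary data --- rather than the existence of a minimizer, which the convexity in \eqref{fpp} renders routine.
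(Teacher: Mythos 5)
Your first half (the direct method) is essentially the paper's own argument: the same one-dimensional Lagrangian, the same convexity-in-$\dot H$ computation (your formula for $f''$ agrees with the paper's expression for $\partial^2_{KK}\mathcal{L}$, which is positive for all $n\ge 2$), and the same coercivity/weak-compactness/lower-semicontinuity scheme in $\mathscr{W}^{1,n}$. That part is correct and matches the paper.

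The genuine gap is in the second half, which you yourself flag as ``the main obstacle'': you never prove that the shooting works. Your plan is to shoot on the initial slope $\dot H(r)=\lambda$ of the second-order equilibrium equation and to show (i) the solution stays strictly increasing and exists on all of $[r,R]$, (ii) $\lambda\mapsto H(R;\lambda)$ is continuous and strictly monotone onto the admissible range, and (iii) the diffeomorphic solution of the boundary value problem is unique. None of (i)--(iii) is argued, and none is routine: the equation is singular where $\dot H\to 0$ or $\dot H\to\infty$, so degeneration in finite ``time'' must be excluded, and monotone dependence on the shooting parameter for a singular second-order equation is not a soft fact. This is exactly where the paper's work lies, and it is done by a device your proposal does not contain: from the equilibrium equation one obtains $(H-s\dot H)'=-sM(s)(H-s\dot H)$, hence \eqref{lcon}, so $H-s\dot H$ keeps a constant sign; therefore $t=H(s)/s$ is monotone and may serve as an independent variable, and with $F(t)=\dot H(s(t))$ the second-order equation collapses to the explicit first-order ODE \eqref{aux}, whose right-hand side is smooth and negative on $\mathbf{R}_+^2$. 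Global existence of its integral curves on all of $(0,\infty)$ together with the asymptotics $F\to\infty$ at $0$ and $F\to 0$ at $\infty$ (Proposition~\ref{propa}), and the strictly monotone $C^1$ dependence on the parameter $\lambda$, where $F_\lambda$ is the curve through $(\lambda,\lambda)$ (Proposition~\ref{kprop}), are what make the shooting --- now performed by solving the first-order problem $H'=F_\lambda(H/s)$, $H(r)=r_*$ as in \eqref{first}, with $\lambda$ indexing integral curves rather than initial slopes --- actually go through, and they also deliver the uniqueness needed to identify the weak minimizer with $H_{\lambda_*}$. Without this reduction, or an equivalent substitute, your steps (i)--(iii) remain assertions, and the theorem is not proved.
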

The total energy is indeed a linear combination of the two operators, the energy functional and distortion functional. However it turns out that to minimize separately
 those two functionals do not solve the combination problem (\cite{arma}).
The problem of finding a minimizer throughout certain class of
homeomorphism has a long history. We want to refer here to some
recent paper concerning minimization problem of harmonic Euclidean
energy \cite{nconj,invent} and of non-Euclidean energy
\cite{calculus} of heomorphisms between given annuli on Euclidean
plane and on a Riemannian space respectively. Further, for
minimization problem of distortion functional, we refer to the
papers  \cite{astala} and \cite{klondon}. On the generalization of
those problem for the spatial annuli and for $n$-harmonic energy
(respectively $(\rho,n)$) energy, see the papers \cite{memoirs} and
\cite{kalajarxiv}.
\section{The proof of main result}
\subsection{Hilbert norm of derivatives of the radial stretching and of its inverse}
Assume that $h(x)=H(s)\frac{x}{s}$, where $s=|x|$. Let
$\mathcal{H}(s)\bydef\frac{H(s)}{s}$. Since $\mathrm{grad}
(s)=\frac{x}{|x|}$, we obtain
$$Dh(x)=\left(\mathcal{H}(s)\right)'\frac{x\otimes   x}{s}+ \mathcal{H}(s) \mathbf{I},$$ where $\mathbf{I}$ is the identity matrix.
For $x\in {\mathbb A}$, let $T_1=N=\frac{x}{|x|}$. Further, let
$T_2$, $\dots$, $T_n$ be $n-1$ unit vectors mutually orthogonal and
orthogonal to $N$. Thus
\[\begin{split} \|Dh(x)\|^2&=\sum_{i=1}^n|Dh(x) T_i |^2=\sum_{i=1}^n\left|\left(\mathcal{H}(s)\right)' \frac{\left<T_i,x\right>}{s} x +\mathcal{H}(s) T_i \right|^2
\\&
= \left(\mathcal{H}'(s)\right)^2  s^2+n \left( \mathcal{H}(s)\right)^2+2\mathcal{H}(s)\mathcal{H}'(s) s
 \\&=\frac{n-1}{s^2} H^2+\dot H^2.\end{split}\]
Moreover, with respect to the basis $T_i$ ($i=1,\dots, n$), we have
$$D^*h Dh=\left(
                                   \begin{array}{cccc}
                                     \dot H^2 & 0 & \dots & 0 \\
                                     0 & \frac{H^2}{s^2} & \dots & 0 \\
                                     \vdots &  \vdots & \ddots & 0 \\
                                     0 & 0 & \dots & \frac{H^2}{s^2} \\
                                   \end{array}
                                 \right).$$ Here $D^*h$ is the adjugate of the matrix $Dh$.
Thus $$J=J_h=\frac{\dot HH^{n-1}}{s^{n-1}}.$$ Let $X=Dh^{-1}(h(x))$.
Then
$$X X^* = (D^*hDh)^{-1}=  \left(
                                   \begin{array}{cccc}
                                     \dot H^{-2} & 0 & \dots & 0 \\
                                     0 & \frac{s^2}{H^2} & \dots & 0 \\
                                     \vdots &  \vdots & \ddots & 0 \\
                                     0 & 0 & \dots & \frac{s^2}{H^2} \\
                                   \end{array}
                                 \right),$$
which implies that
$$\|X\|= \sqrt{\dot H^{-2}+\frac{(n-1)s^2}{H^2}}.$$

Hence $$\|Dh^{-1}(h(x))\|^n J_h =J_h\left[\dot
H^{-2}+\frac{(n-1)s^2}{H^2}\right]^{\frac{n}{2}}.$$
\subsection{Total energy of radial mappings}
Let $a=\frac{\alpha}{|\mathbb{A}_\ast|}$ and $b=\frac{\beta}{|\mathbb{A}|}$. Then we calculate the energy of a radial stretching $h=H(r)\frac{x}{|x|}$.
We obtain \[\begin{split}\mathcal{E}[h]&=a\int_{\mathbb{A}} \|Dh\|^n +b\int_{\mathbb{A}_\ast}\|D{h^{-1}}\|^n= \omega_{n-1} \int_r^R \mathcal{L}[s,H,\dot H] ds,\end{split}\] where

\[\begin{split}\mathcal{L}[s,H,\dot H]&=as^{n-1} \|Dh(x)\|^{n}+b\|Dh^{-1}(h(x))\|^{n}J_h
\\&=as^{n-1} \left[\frac{(n-1) H^2}{s^2}+\dot H^2\right]^{\frac{n}{2}}+bH^{n-1} \left[\frac{(n-1) s^2}{H^2}+\frac{1}{\dot H^2}\right]^{\frac{n}{2}} \dot H\end{split}\]
and  $\omega_{n-1}$ is the Hausdoff measure of the unit sphere. Then
the equilibrium equation (Euler-Lagrange equation) is
$$\mathcal{L}_H=\partial_s \mathcal{L}_{\dot H}$$ and it  reduces to
the equation
\begin{equation}\label{convex}\ddot H=(s\dot H-H) M(s),\end{equation} where

\begin{equation}\label{M}M(s)=\frac{\mathrm{I}+\mathrm{II}}{\mathrm{III}},\end{equation}

$$ \mathrm{I}= a{s^{-1} \left[{(n-1) H^2}+{s^2}\dot H^2\right]^\frac{n-4}{2}
\left[(n-1) H^2+(n-2) s H \dot H+s^2 \dot H^2\right]},$$
$$\mathrm{II}=b{H^{n-1} \left[(n-1)\dot H^2 s^2+H^2\right]^\frac{n-4}{2} \dot
H^{1-n}\left[H^2+(n-2) s H \dot H+(n-1) s^2 \dot H^2\right]} $$ and

\begin{eqnarray*}
\mathrm{III}&=&\left(H^2+s^2 \dot H^2\right) \Bigg\{a{s \big[{(n-1) H^2}+\dot
H^2{s^2}\big]^\frac{n-4}{2}}\\
&& +b\frac{H \left[(n-1) s^2{\dot
H^2}+{H^2}\right]^\frac{n-4}{2}}{\dot H^{1+n} }\Bigg\}.
\end{eqnarray*} So $$ (H-s\dot H)'=-s (H-s\dot H) M(s),$$ which is
equivalent to

$$\big[\log(H-s\dot H)\big]'=-s M(s).$$
Thus $$\log(H-t\dot H) = \int_s^{s_1} (-\tau M(\tau))d\tau+c,$$
which gives that

\begin{equation}\label{lcon}H-s\dot H=c \exp[\int_{s_1}^{s} (-\tau M(\tau))d\tau].\end{equation}

Now we consider the following boundary problem 

\begin{equation}\label{ndim}\left\{
  \begin{array}{ll}
    \ddot H=(s\dot H-H) M(s) & \hbox{} \\
    H(r)=r_\ast,\ \ \ H(R)=R_\ast. & \hbox{}
  \end{array}
\right.\end{equation} which is the $n-$dimensional generalization of the boundary problem \eqref{2dim}.

Now we prove that the diffeomorphic  solution of \eqref{ndim} does
exist. The idea is simple, we want to reduce the equation
\eqref{ndim} into
 an ODE of the first order, but to do this we assume that the diffeomorphic solution $H$ exists. This assumption is not harmful. Namely,
  the proof can be started from a certain first order ODE \begin{equation}\label{helpi}F'=G[t,F(t)],\end{equation} which has to do nothing
   with $H$ (see \eqref{aux} below). Then we solve \eqref{helpi} and, by using the solutions of it, we construct solutions of \eqref{ndim}.
   Such a solution $H$ will be a diffeomorphism and so it  will satisfy one of the three cases listed below. On the other hand if we have a diffeomorphic solution $H$ of \eqref{ndim}, then it will satisfy the equation \eqref{lcon} for some continuous $M$ and this will imply the uniqueness of solution $H$. 

So if $H$ is a strictly increasing $C^2$ diffeomorphism defined in a domain $(a,b)$ that solves the equation \eqref{ndim},
then $\dot H(s)>0$ and from \eqref{lcon}, we conclude that there are three possible cases:
\begin{itemize}
\item {\bf Case~1} $c=0$. Then $H-s\dot H\equiv 0$, or what is the same $H(s)=c s$, and this produces a linear mapping $h(x)=cx$, so in this case $$\frac{R}{r}=\frac{R_\ast}{r_\ast}.$$
\item {\bf Case~2} $c>0$. Then $H-s\dot H>0$ and thus $t(s)=\frac{H(s)}{s}$ is a monotone increasing.
\item {\bf Case~3} $c<0$. Then $H-s\dot H<0$ and thus $t(s)=\frac{H(s)}{s}$ is a monotone decreasing.
\end{itemize}

If $$\frac{R}{r}\neq \frac{R_\ast}{r_\ast},$$ we  take the new variable $t=\frac{H(s)}{s}$ and the new function
\begin{equation}\label{prin}F(t)=\dot H\left(\frac{H(s(t))}{s(t)}\right),\end{equation} where $s(t)$ is the inverse of $t=t(s)$. Then we obtain $$\ddot H(s)=\frac{\dot F(t) (s \dot H - H)}{s^2}$$
 and

\begin{equation}\label{aux}\dot F(t)=G[t,F(t)]=-\frac{U[t,F(t)]+V[t,F(t)]}{W[t,F(t)]},\end{equation} where for $(t,y)\in \mathbf{R}_+^2$

$$U(t,y)\bydef a\left[(n-1) t^2+y^2\right]^{\frac{n-4}{2}} \left[(n-1) t^2+(n-2) t y+y^2\right], $$

$$V(t,y)\bydef b\frac{\left[(n-1) y^2+{t^2}\right]^{{\frac{n-4}{2}}}  \left[t^2+(n-2) t y+(n-1) y^2\right]}{ty^{n-1}},$$
and
$$W(t,y)\bydef\left(t^2+y^2\right) \left\{a\left[(n-1) t^2+y^2\right]^{\frac{n-4}{2}}+b\frac{t \left[(n-1){y^2}+{t^2}\right]^{{\frac{n-4}{2}}}}{y^{1+n} }\right\}.$$
Observe that $G:\mathbf{R}_+^2\to \mathbf{R}_{-}$ is smooth on
$\mathbf{R}_+^2$. Using the Picard-Lindel\"of theorem, we observe
that through any point $(t_\circ, y_\circ)\in\mathbf{R}_+^2$ there
passes exactly one smooth integral curve defined in a neighborhood
of this point. Then the extension theorem, see  \cite{hartman},
tells us that such a local solution   extends uniquely (as a
solution) to the so-called maximal interval of existence. Denote
this interval by $(\alpha, \beta)$, where $0 \le \alpha< \beta\le
\infty$.

The characteristic feature of the maximal interval of existence is that the points
$(t, F(t))$ hit the boundary of $\mathbf{R}_+^2$ as $t \to  \alpha$ or $t \to  \beta$. Precisely, this means
that the points $(t, F(t))$ lie outside any given compact subset of $\mathbf{R}_+^2$ when
$t$ approaches $\alpha$ or $\beta$.
\begin{proposition}\label{propa}
Every local solution $F$ of the equation \eqref{aux} has
$(0,+\infty)$ as the interval of its existence. Moreover, $F$ is
decreasing and

\begin{itemize}
    \item $\lim_{t\to 0}F(t)=+\infty$ and
  \item $\lim_{t\to +\infty}F(t)=0$.
\end{itemize}
\end{proposition}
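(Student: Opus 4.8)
The strict monotonicity is immediate from the hypothesis that $G$ takes values in $\mathbf{R}_{-}$: any solution obeys $\dot F=G[t,F(t)]<0$, so $F$ is strictly decreasing on its maximal interval $(\alpha,\beta)$. The remaining assertions concern the endpoints $\alpha,\beta$ and the two one-sided limits, and my plan is to extract them from a leading-order asymptotic analysis of $G$ near the four boundary pieces of $\mathbf{R}_{+}^{2}$, feeding these asymptotics into elementary scalar comparison arguments. The organising principle is that the escape characterisation of $(\alpha,\beta)$ quoted above, combined with the monotonicity of $F$, pins down which boundary piece a trajectory can approach: as $t\to\beta^{-}$ the decreasing value $F(t)$ can leave every compact subset of $\mathbf{R}_{+}^{2}$ only through $\{y=0\}$ (should $\beta<\infty$) or through $t\to+\infty$; dually, as $t\to\alpha^{+}$ the increasing value $F(t)$ can escape only through $\{y=+\infty\}$ (should $\alpha>0$) or through $t\to 0^{+}$.

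The technical heart is to record the behaviour of $G$ in the four corresponding regimes. Writing the numerator $U+V$ and denominator $W$ according to their homogeneity split (the $a$-terms $U$ and $W_{a}$ are homogeneous of degree $n-2$, the $b$-terms $V$ and $W_{b}$ of degree $-2$), a direct computation of which term dominates gives, with positive constants depending only on $a,b,n$: near $\{y=0\}$ with $t$ in a compact subset of $(0,\infty)$, $G(t,y)\asymp -y^{2}/t^{2}$; near $\{y=+\infty\}$ with $t$ bounded, $G(t,y)\to -1$; as $t\to+\infty$ with $y$ in a compact subset of $(0,\infty)$, $G(t,y)\asymp -y^{n+1}/t$; and as $t\to 0^{+}$ with $y$ in a compact subset of $(0,\infty)$, $G(t,y)\asymp -1/(t\,y^{n-1})$. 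I expect this step to be the main obstacle, because the exponent $(n-4)/2$ is negative when $n=3$ and the two energy contributions carry different homogeneity degrees, so the identity of the dominant term switches from corner to corner and must be tracked carefully to make each estimate uniform over the stated compact $y$-range.

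With these asymptotics in hand the right endpoint is handled as follows. If $\beta<\infty$, monotonicity and escape force $F(t)\to 0$ as $t\to\beta^{-}$ while $t$ stays in a compact subinterval of $(0,\infty)$; the first asymptotic then yields a bound $G\ge -C y^{2}$ there, whence $\bigl(1/F\bigr)'=-\dot F/F^{2}\le C$ and $1/F$ remains bounded on $[t_{0},\beta)$, contradicting $F\to 0$. Hence $\beta=+\infty$. Being decreasing and positive, $F$ then has a limit $L\ge 0$ at $+\infty$; if $L>0$ the third asymptotic gives $\dot F\le -c\,L^{n+1}/t$ for all large $t$, and integrating forces $F\to-\infty$, contradicting positivity. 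Therefore $L=\lim_{t\to+\infty}F(t)=0$.

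The left endpoint is dual. If $\alpha>0$, monotonicity and escape force $F(t)\to+\infty$ as $t\to\alpha^{+}$, with $t$ confined to $(\alpha,t_{0}]$; but the second asymptotic bounds $|\dot F|$ by a constant on this range, so $F$ is Lipschitz there and cannot blow up over a finite interval, a contradiction. Hence $\alpha=0$. Finally $F$ has a limit $M\le +\infty$ as $t\to 0^{+}$; if $M<\infty$, then $y=F(t)$ stays in a compact subset of $(0,\infty)$ near $0$ and the fourth asymptotic gives $\dot F\le -c/(M^{n-1}t)$, whose backward integral diverges, forcing $F(t)\to+\infty$ and contradicting $F\le M$. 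Thus $M=\lim_{t\to 0^{+}}F(t)=+\infty$, which completes the plan.
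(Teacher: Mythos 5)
Your proof is correct, and its skeleton is the same as the paper's: monotonicity from the sign of $G$, asymptotics of $G$ at the boundary of $\mathbf{R}_+^2$, and logarithmic-divergence comparison arguments. Indeed, your fourth and third asymptotics are exactly the paper's limits \eqref{tFt} and \eqref{tFt2}, applied in the same way to obtain $\lim_{t\to 0}F(t)=+\infty$ and $\lim_{t\to+\infty}F(t)=0$, and your second asymptotic ($G\to -1$) is precisely how the paper rules out $\alpha>0$ (the paper phrases the resulting Lipschitz bound through the boundedness of the sequence $y_m=F(\alpha+1)-F(\alpha+1/m)$). The genuine difference is at the right endpoint. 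The paper proves $\beta=\infty$ by continuation: since $F$ is decreasing, $B=\lim_{t\uparrow\beta}F(t)<\infty$, and the paper asserts $B\in\mathbf{R}_+$, so that $(\beta,B)$ is an interior point where $G$ is continuous and the solution extends past $\beta$. That assertion tacitly excludes $B=0$, i.e.\ the possibility that the trajectory crashes into the edge $\{y=0\}$ in finite time --- a scenario the escape criterion explicitly permits and which the paper never addresses. Your argument handles exactly this case: the near-$\{y=0\}$ estimate $G\asymp -y^2/t^2$ (which has no counterpart in the paper) yields $(1/F)'\le C$ on $[t_0,\beta)$, so $1/F$ stays bounded on a finite interval and $F$ cannot reach $0$; hence escape is impossible and $\beta=\infty$. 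At this step your proof is therefore not merely different but more complete than the paper's. One small caution: state your second asymptotic for $t$ in a compact subset of $(0,\infty)$ rather than merely bounded $t$ --- uniformity fails as $t\to 0$, because the $b$-terms (homogeneity degree $-2$) dominate again when $ty^{n-1}\not\to\infty$ --- but this is all you need, since you apply it on $[\alpha,t_0]$ under the contradiction hypothesis $\alpha>0$.
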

\begin{proof}
Since $G$ is negative, we see that $F$ is a decreasing function on
its maximal interval $(\alpha,\beta)$. Let $A=\lim_{t\downarrow
\alpha} F(t)$ and $B=\lim_{t\uparrow\beta} F(t).$ We will show that
$\alpha=0$ and $\beta=\infty$.
 First, we prove $\beta=\infty$. Since $B<A$, we see that $B\in\mathbf{R}_+$ and thus the point $(\beta,B)$
is a point
 of continuity of $G$ which means that $F$ can be continued smoothly above $\beta$. Hence $\beta=\infty$.

Now we prove $\alpha=0$. If we assume $\alpha>0$, then this
assumption will lead to contradiction. Since    $F$ is decreasing,
we know that $\lim_{t\downarrow \alpha}F(t)=+\infty$ and
$$\lim_{t\downarrow \alpha} F'(t)=\lim_{t\downarrow \alpha}
G(t,F(t))=-1.$$ But this is impossible, because  the sequence
$$y_m=F(\alpha+1)-F(\alpha+1/m)=\int_{\alpha+\frac{1}{m}}^{\alpha+1} F'(t)dt,$$ would be bounded.

Now we begin to show that $A=\infty$ and $B=0$.

If $A=F(0)\bydef \lim_{t\to 0}F(t)<\infty$, then, by \eqref{aux}, we obtain
\begin{equation}\label{tFt}\lim_{t\to 0}(t
F'(t))=-\frac{b}{a}(n-1)^{-1+\frac{n}{2}} A^{1-n}.\end{equation}
Then there is $\delta>0$ so that $0<t<\delta$ implies $$t F'(t)\le
C=-\frac{b}{2a}(n-1)^{-1+\frac{n}{2}} A^{1-n}.$$ Thus
$$F(\delta)-F(t)=\int_{t}^\delta F'(t)\le C \log \left(\frac{\delta}{t}\right),$$
and hence $$F(t)\ge F(\delta) -C \log \left(\frac{\delta}{t}\right)$$ implying that $\lim_{t\to 0} F(t)=\infty$ which is a contradiction.
 Similarly, by using   \eqref{aux}, we get \begin{equation}\label{tFt2}\lim_{t\to \infty }(t F'(t))=-\frac{a}{b}(n-1)^{-1+\frac{n}{2}} B^{n+1},
 \end{equation} where $$B=\lim_{t\to +\infty}F(t)$$ and in similar way  we establish the second statement.
\end{proof}

Observe that the graph of the solution $F$ intersects the diagonal $\{(x,x): x>0\}$ at exactly one point. Then we define the particular solution   $F=F_\lambda$,
with the initial condition $(\lambda,\lambda)\in \mathrm{Graf}(F)$. 

Now we prove the following
\begin{proposition}\label{kprop}
For fixed $t\in(0,\infty)$, the function $Q(\lambda)=F_\lambda(t)$ is an increasing $C^1$ function of $(0,\infty)$ onto itself.
\end{proposition}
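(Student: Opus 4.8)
The plan is to parametrize every integral curve of \eqref{aux} by its unique diagonal crossing point $\lambda$---which exists and is unique because, by Proposition~\ref{propa}, $F$ decreases from $+\infty$ to $0$, so $F(t)-t$ vanishes exactly once---and then to extract the three asserted properties of $Q(\lambda)=F_\lambda(t)$ from Proposition~\ref{propa} together with the uniqueness and smooth-dependence facts already invoked for \eqref{aux}.

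First I would establish strict monotonicity from the non-crossing of solutions. Since $G$ is smooth, Picard--Lindel\"of forbids two distinct solutions of \eqref{aux} from meeting, so the curves are totally ordered on $(0,\infty)$. For $0<\lambda_1<\lambda_2$, evaluating the lower-indexed curve at $t=\lambda_2$ and using that $F_{\lambda_1}$ is decreasing gives
\[
F_{\lambda_1}(\lambda_2)<F_{\lambda_1}(\lambda_1)=\lambda_1<\lambda_2=F_{\lambda_2}(\lambda_2),
\]
so $F_{\lambda_1}<F_{\lambda_2}$ at one point, hence at every $t$; thus $Q$ is strictly increasing for each fixed $t$.

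Then I would obtain $C^1$ regularity from differentiable dependence on initial data. Writing $\Phi(t;t_0,y_0)$ for the value at $t$ of the solution of \eqref{aux} through $(t_0,y_0)$, one has $Q(\lambda)=\Phi(t;\lambda,\lambda)$, and since $G\in C^\infty(\mathbf{R}_+^2)$ the flow $\Phi$ is $C^1$ (indeed smooth) jointly in its arguments; composing with $\lambda\mapsto(\lambda,\lambda)$ yields $Q\in C^1$. As an alternative that simultaneously recovers monotonicity and gives $Q'>0$, I would use the variational equation $\dot\psi=G_y(t,F_\lambda)\,\psi$ for $\psi=\partial_\lambda F_\lambda$: it is linear and homogeneous, and differentiating the identity $F_\lambda(\lambda)=\lambda$ supplies the initial value $\psi(\lambda)=1-G(\lambda,\lambda)>1>0$, whence $Q'(\lambda)=\psi(t)>0$ for all $t$, so $Q$ is in fact a $C^1$ diffeomorphism.

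Finally I would prove surjectivity by a squeeze. For fixed $t$, monotonicity of each $F_\lambda$ gives $Q(\lambda)=F_\lambda(t)>F_\lambda(\lambda)=\lambda$ when $\lambda>t$, and $0<Q(\lambda)<F_\lambda(\lambda)=\lambda$ when $\lambda<t$; hence $Q(\lambda)\to+\infty$ as $\lambda\to\infty$ and $Q(\lambda)\to0^+$ as $\lambda\to0^+$, and the intermediate value theorem completes the surjection onto $(0,\infty)$. I expect the only genuinely delicate step to be the passage from the local differentiable-dependence theorem to $C^1$ regularity uniformly in $\lambda$ over the whole half-line: this is exactly where the global existence from Proposition~\ref{propa} is essential, letting me cover the compact $t$-segment between $t$ and $\lambda$ by finitely many charts and propagate smoothness across it; by contrast the monotonicity and surjectivity fall out directly from the ordering and the decrease already recorded in Proposition~\ref{propa}.
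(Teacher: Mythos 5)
Your proposal is correct and follows essentially the same route as the paper: strict monotonicity via the non-crossing of integral curves combined with the ordering $F_{\lambda_1}(\lambda_2)<\lambda_1<\lambda_2=F_{\lambda_2}(\lambda_2)$, regularity of $Q$ via smooth dependence on initial conditions (the paper cites Hartman), and surjectivity via the squeeze $Q(\lambda)\le\lambda$ for $\lambda<t$ and $Q(\lambda)\ge\lambda$ for $\lambda>t$. Your supplementary variational-equation argument, giving $\psi(\lambda)=1-G(\lambda,\lambda)>1$ and hence $Q'(\lambda)=\psi(t)>0$, is a valid bonus that yields the slightly stronger conclusion that $Q$ is a $C^1$ diffeomorphism, which the paper does not state explicitly.
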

\begin{proof}
The fact that $Q(\lambda)$ is of class $C^1$ follows from the theorem on dependence of initial conditions and parameters (\cite[Corollary~4.1.]{hartman}).

Further, for two different $\lambda_1<\lambda_2$, let
$R(t)=F_{\lambda_2}(t)-F_{\lambda_1}(t)$. Then $R(t)\neq 0$, because
near $t$, the Cauchy problem $F'(t)=G(t,F(t))$, $F(t)=F_0$ has the
unique solution. Thus $R(t)$ has the constant sign.  Further,
because $F_\lambda$ is decreasing,
$F_{\lambda_2}(\lambda_2)=\lambda_2>\lambda_1=F_{\lambda_1}(\lambda_1)>F_{\lambda_1}(\lambda_2)$,
it follows that $R(t)>0$. Thus $Q$ is increasing. In order to prove
that $\lim_{\lambda\downarrow 0} Q(\lambda)=0$ and
$\lim_{\lambda\uparrow 0} Q(\lambda)=\infty$ do as follows. For
$\epsilon<t<\epsilon^{-1}$, we obtain
$\epsilon=F_\epsilon(\epsilon)\ge F_\epsilon(t)$ and
$1/\epsilon=F_{1/\epsilon}(1/\epsilon)\le F_{1/\epsilon}(t)$,
because $t\to F_\lambda(t)$ is decreasing. This implies the
proposition
\end{proof}
Now we prove the following theorem which asserts that our boundary problem \eqref{ndim} has a unique diffeomorphic solution.

\begin{theorem}
Let $0<r<R$ and $0<r_*<R_*$. Then there is an increasing diffeomorphism $H:[r,R]\to [r_*,R_*]$ that  solves the principal equation \eqref{convex}.
\end{theorem}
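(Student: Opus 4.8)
The plan is to run the reduction behind \eqref{aux} in reverse: starting from the one-parameter family $F_\lambda$ of global solutions produced by Propositions~\ref{propa} and~\ref{kprop}, I reconstruct a candidate profile $H$ and then tune the single parameter $\lambda$ so that both boundary conditions of \eqref{ndim} hold. I first dispose of the degenerate situation $R/r=R_\ast/r_\ast$: here the linear map $H(s)=(r_\ast/r)s$ is an increasing diffeomorphism of $[r,R]$ onto $[r_\ast,R_\ast]$ with $H-s\dot H\equiv0$, so $\ddot H\equiv0$ and \eqref{convex} holds (this is Case~1). For the rest assume $R/r\neq R_\ast/r_\ast$ and set $t_1=r_\ast/r$, $t_2=R_\ast/R$.

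For the reconstruction I would fix $\lambda$, let $F_\lambda$ be the corresponding solution of \eqref{aux} with its unique diagonal crossing at $t=\lambda$, solve the separable equation $\dot t=(F_\lambda(t)-t)/s$ with initial value $t(r)=t_1$, and put $H(s)=s\,t(s)$. Then $\dot H=t+s\dot t=F_\lambda(t)>0$, so $H$ is smooth and strictly increasing, while $\ddot H=\dot F_\lambda(t)\,\dot t=\dot F_\lambda(t)(s\dot H-H)/s^2$, which is precisely $(s\dot H-H)M(s)$ by the way $G$ was defined in \eqref{aux}; thus any such $H$ automatically solves \eqref{convex} and satisfies $H(r)=r\,t_1=r_\ast$. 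Separating variables, the only remaining condition $H(R)=R_\ast$, i.e.\ $t(R)=t_2$, becomes the scalar equation
\begin{equation*}
\Psi(\lambda):=\int_{t_1}^{t_2}\frac{dt}{F_\lambda(t)-t}=\log\frac{R}{r}.
\end{equation*}

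The crux, and the step I expect to be the main obstacle, is to solve $\Psi(\lambda)=\log(R/r)$, because the integrand blows up exactly when the equilibrium $\lambda$ enters the interval of integration, so the admissible range of $\lambda$ and the limiting values of $\Psi$ must be identified carefully. If $R_\ast/r_\ast>R/r$ then $t_1<t_2$ (Case~2) and $F_\lambda(t)-t>0$ on $[t_1,t_2]$ exactly when $\lambda>t_2$; by Proposition~\ref{kprop} the map $\lambda\mapsto F_\lambda(t)$ is increasing, so $\Psi$ is continuous and strictly decreasing on $(t_2,\infty)$. As $\lambda\downarrow t_2$ the crossing reaches the upper endpoint and, since $\dot F_\lambda(\lambda)<0$, the local behaviour $F_\lambda(t)-t\sim(\dot F_\lambda(\lambda)-1)(t-\lambda)$ yields a nonintegrable singularity and $\Psi(\lambda)\to+\infty$; as $\lambda\to\infty$ one has $F_\lambda(t)\to\infty$ uniformly on $[t_1,t_2]$, so $\Psi(\lambda)\to0$. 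Hence $\Psi$ maps $(t_2,\infty)$ strictly monotonically onto $(0,\infty)$, and $\log(R/r)>0$ selects a unique $\lambda_\ast$. The opposite case $R_\ast/r_\ast<R/r$ (Case~3, $t_1>t_2$) is symmetric: the admissible range is $\lambda<t_2$, and writing $\Psi(\lambda)=\int_{t_2}^{t_1}dt/(t-F_\lambda(t))$ shows $\Psi$ is strictly increasing, running from $\log(t_1/t_2)=\log\frac{r_\ast R}{rR_\ast}$ as $\lambda\downarrow0$ (where $F_\lambda\to0$) up to $+\infty$ as $\lambda\uparrow t_2$; here the hypothesis $R_\ast>r_\ast$ is exactly what forces $\log(R/r)>\log\frac{r_\ast R}{rR_\ast}$, so again a unique $\lambda_\ast$ exists.

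Finally I would collect the pieces: for this $\lambda_\ast$ the reconstructed $H$ is an increasing $C^\infty$ diffeomorphism of $[r,R]$ onto $[r_\ast,R_\ast]$ solving \eqref{convex}, giving existence. Uniqueness then follows from \eqref{lcon}, which shows that any diffeomorphic solution has $H-s\dot H$ of constant sign, hence lies in exactly one of the three cases, produces via \eqref{prin} a solution of \eqref{aux} of the form $F_\lambda$, and---by the strict monotonicity of $\Psi$---must correspond to the same $\lambda_\ast$, hence to the same $H$.
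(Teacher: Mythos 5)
Your proposal is correct, and it reaches the theorem by a genuinely different mechanism than the paper, even though both arguments are ultimately shooting arguments in the parameter $\lambda$ resting on Propositions~\ref{propa} and~\ref{kprop}. The paper works directly with the profile: for each $\lambda$ it solves the initial value problem \eqref{first}, $H'(s)=F_\lambda(H(s)/s)$, $H(r)=r_*$, proves by an extension argument that the solution lives on all of $[r,\infty)$, shows $H_\lambda(R)\to\infty$ as $\lambda\to\infty$ (via $F_{\lambda}(M/r)\to\infty$), and then concludes that a unique $\lambda_*$ achieves $H_{\lambda_*}(R)=R_*$. You instead exploit the separable structure of the equation in the variable $t=H/s$, collapsing the boundary condition $H(R)=R_*$ into the explicit scalar equation $\Psi(\lambda)=\int_{t_1}^{t_2}\frac{dt}{F_\lambda(t)-t}=\log (R/r)$, and then analyze $\Psi$ completely: strict monotonicity in $\lambda$ from Proposition~\ref{kprop}, blow-up as the equilibrium $\lambda$ approaches the relevant endpoint, and the opposite limit from $F_\lambda\to\infty$ (resp.\ $F_\lambda\to 0$). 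Your route buys three things the paper leaves implicit or omits: the intermediate value argument is complete, since you identify \emph{both} limiting values of $\Psi$ as well as its monotonicity and continuity, whereas the paper only establishes the divergence of $H_\lambda(R)$ as $\lambda\to\infty$ and never discusses the behavior as $\lambda\downarrow 0$ or the monotone dependence on $\lambda$; uniqueness of the diffeomorphic solution follows cleanly from strict monotonicity of $\Psi$ combined with \eqref{lcon}; and the degenerate case $R/r=R_*/r_*$ plus the two oriented cases are treated explicitly, matching the trichotomy the paper lists after \eqref{lcon} but does not actually use in its proof. What the paper's route buys in exchange is freedom from any case distinction: the IVP \eqref{first} makes sense for every $\lambda>0$ regardless of where the equilibrium of $F_\lambda$ sits relative to $[t_1,t_2]$. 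If you flesh out your sketch, two steps deserve an explicit line: continuity of $\Psi$ (dominated convergence on compact $\lambda$-intervals, where $F_\lambda(t)-t$ stays bounded away from zero), and the justification that $\Psi(\lambda)\to+\infty$ as $\lambda$ tends to the critical endpoint, which is cleanest by monotone convergence toward the limiting integral with its nonintegrable $C^1$-type zero, rather than by the pointwise asymptotic expansion you indicate.
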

\begin{proof}
According to \eqref{prin} and \eqref{aux}, the equation \eqref{convex} is reduced to solving the initial value problems
\begin{equation}\label{first}
\left\{
  \begin{array}{ll}
    H'(s)=F_\lambda(\frac{H(s)}{s}), & \hbox{for $s>0$ and $H(s)>0$;} \\
    H(r)=r_*, & \hbox{}
  \end{array}
\right.
\end{equation}
for $\lambda>0$. Further we find $\lambda>0$ so that $H(R)=R_*$.
Using the Picard-Lindel\"of theorem, we observe that for fixed $\lambda_\circ$ there is  exactly one smooth solution $H=H_{\lambda_\circ}$ of the
problem \eqref{first}. Let $(\alpha,\beta)$ be the maximal interval of existence of $H$. Similarly as in the proof of Proposition~\ref{propa},
we obtain that $\beta=\infty$. Namely if $\beta<\infty$, then since $F_{\lambda_\circ}$ is positive we conclude that
$B=H(\beta^-):=\lim_{s\uparrow \beta}\in [r_\ast,\infty]$. If $B=\infty$, then $H'(\beta^-)=F_{\lambda_\circ}(+\infty)=0$, and thus by \eqref{first},
$H(\beta^-)\le C\beta$ which is a contradiction. Thus $B<\infty$. But then $(x,y)=(\beta, B)$  is a point of continuity of $F_{\lambda_\circ}(\frac{y}{x})$,
so the solution $H$ can be extended above $\beta$. This implies that $\beta=\infty$. Since $r\in (\alpha,\beta)$, then $\alpha< r$. From \eqref{first} we obtain
$$H_\lambda(m)-r_\ast=\int_r^{m}F_\lambda\left(\frac{H_\lambda(s)}{s}\right) ds, \ \ m>r$$

Assume  that  $H_{\lambda_n}(m)<M$ and $\lambda_n \to \infty$. Since
$F_\lambda$ is positive, we conclude that  $H_{\lambda_n}$ is
increasing and
 $$\frac{H_{\lambda_n}(s)}{s} \le \frac{M}{r},\ \ \ r\le s\le m.$$ Thus $$H_{\lambda_n}(m)-r_\ast\ge \int_{r}^m F_{\lambda_n}\left(\frac{M}{r}\right) ds.$$
 The right hand-side  of the last inequality tends to $+\infty$ because of Proposition~\ref{kprop}. Thus $H_\lambda(m)$ tends to infinity when $\lambda\to\infty$.

Therefore, there is exactly one $\lambda_*$, such that
$H=H_{\lambda_*}$ maps the interval $[r,R]$ onto $[r_*, R_*]$.
\end{proof}

Next we prove Theorem~\ref{wienn}, which is equivalent to the
following theorem.
\begin{theorem}
Let $n\ge 2$. If $\mathcal{P}(A,A_*)$ is the family of radial
stretching   with finite total energy, then the radial mapping
$h=h_{\lambda_*}$, defined by
$h(x)=H_{\lambda_*}(|x|)\frac{x}{|x|}$,  minimizes the functional
$\mathcal{E}:\mathcal{P}(A,A_*)\to \mathbf{R}$.
\end{theorem}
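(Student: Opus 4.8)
The plan is to prove minimality by the classical Weierstrass field theory, using the foliation by extremals furnished by Propositions~\ref{propa} and \ref{kprop} as the field, and the convexity of the Lagrangian in the velocity variable $\dot H$ as the Weierstrass condition. Every competitor in $\mathcal{P}(A,A_*)$ is an increasing homeomorphism $H\colon[r,R]\to[r_*,R_*]$ with $H(r)=r_*$, $H(R)=R_*$ (finiteness of the distortion part, whose integrand grows like $\dot H^{1-n}$ as $\dot H\to 0$, forces $\dot H>0$ almost everywhere), so we are dealing with a fixed--endpoint one--dimensional problem for $\int_r^R\mathcal{L}[s,H,\dot H]\,ds$, and $H_{\lambda_*}$ is an extremal by construction. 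When $R/r=R_*/r_*$ (Case~1) the extremal is the linear map and the claim is immediate, so we assume $R/r\neq R_*/r_*$.

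First I would record the Legendre--Weierstrass convexity: for fixed $s>0$ and $H>0$, the map $\dot H\mapsto\mathcal{L}[s,H,\dot H]$ is strictly convex on $(0,\infty)$. For the $n$--energy part this is the convexity of $v\mapsto\bigl(d+v^2\bigr)^{n/2}$, whose second derivative is $n\,(d+v^2)^{n/2-2}\bigl[d+(n-1)v^2\bigr]>0$; for the distortion part, a constant multiple of $\phi(v)=v\,(c+v^{-2})^{n/2}$ with $c=(n-1)s^2/H^2$, a direct computation gives $\phi''(v)=n\,v^{-3}(c+v^{-2})^{n/2-2}\bigl[c+(n-1)v^{-2}\bigr]>0$. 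Consequently the Weierstrass excess function $E(s,H,p,q)=\mathcal{L}[s,H,q]-\mathcal{L}[s,H,p]-(q-p)\,\mathcal{L}_{\dot H}[s,H,p]$ is nonnegative, vanishing only when $q=p$.

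Next I would assemble the field. By \eqref{first} every $H_\lambda$ passes through $(r,r_*)$, so $\{H_\lambda\}_{\lambda>0}$ is a central field, a pencil of extremals through the apex $(r,r_*)$. Proposition~\ref{kprop}, transported through the substitution \eqref{prin}, shows that for each fixed $s>r$ the map $\lambda\mapsto H_\lambda(s)$ is strictly increasing and sweeps $(r_*,\infty)$; with the uniqueness part of Picard--Lindel\"of this ensures the extremals do not meet again for $s>r$, so through every point of $\mathcal{D}=\{(s,H):r<s\le R,\ H>r_*\}$ there passes exactly one field extremal. This defines a smooth slope field $\Psi(s,H)=\dot H_\lambda(s)$, with $\lambda$ the unique index satisfying $H_\lambda(s)=H$, its regularity coming from the $C^1$--dependence on $\lambda$. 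Since $\{H_\lambda\}$ solves the Euler--Lagrange equation \eqref{convex}, the Hilbert invariant integral is exact: the one--form $N=\mathcal{L}[s,H,\Psi]\,ds+\mathcal{L}_{\dot H}[s,H,\Psi]\,(dH-\Psi\,ds)$ equals $dS$ for a $C^1$ value function $S$ on $\mathcal{D}$.

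Finally I would close the argument. For any competitor $H\in\mathcal{P}(A,A_*)$, its graph lies in $\overline{\mathcal{D}}$ and $H$ is absolutely continuous, so $\int_r^R N=S(R,R_*)-S(r,r_*)$ depends only on the shared endpoints; in particular it equals $\mathcal{E}[h_{\lambda_*}]/\omega_{n-1}$, because along $H_{\lambda_*}$ one has $\dot H_{\lambda_*}=\Psi$ and hence $\mathcal{L}=N$. Subtracting,
$$\tfrac{1}{\omega_{n-1}}\bigl(\mathcal{E}[h]-\mathcal{E}[h_{\lambda_*}]\bigr)=\int_r^R\Bigl(\mathcal{L}[s,H,\dot H]-N\Bigr)\,ds=\int_r^R E\bigl(s,H,\Psi(s,H),\dot H\bigr)\,ds\ge 0,$$
by nonnegativity of the excess function, with equality forcing $\dot H=\Psi(s,H)$ a.e., i.e. $H=H_{\lambda_*}$. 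I expect the main obstacle to be the rigorous assembly of the field: proving that the central pencil $\{H_\lambda\}$ covers $\mathcal{D}$ simply, with no re--crossing of extremals away from the apex, that $S$ is genuinely $C^1$ up to $s=R$, and that every finite--energy stretching (not merely a smooth one) is an admissible absolutely continuous competitor whose graph remains in $\overline{\mathcal{D}}$ with $\dot H>0$ a.e.
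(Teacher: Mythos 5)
Your proposal is correct in substance, but it follows a genuinely different route from the paper. The paper proves minimality by the direct method of the calculus of variations: it uses the same convexity computation ($\partial^2_{KK}\mathcal{L}>0$) together with the coercivity bound \eqref{bound1} to extract from a minimizing sequence $h_m$ a weak $\mathscr{W}^{1,n}$ limit $H_\circ$, invokes weak lower semicontinuity to see that $H_\circ$ attains the infimum, then uses one-dimensional regularity theory to conclude $H_\circ\in C^\infty[r,R]$ and solves the Euler--Lagrange equation \eqref{convex}, and finally identifies $H_\circ=H_{\lambda_*}$ by uniqueness of the stationary solution. You instead calibrate: you embed $H_{\lambda_*}$ in the central pencil $\{H_\lambda\}$ of \eqref{first}, use Hilbert's invariant integral, and conclude via nonnegativity of the Weierstrass excess, which deploys the very same convexity pointwise rather than through lower semicontinuity. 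Your route buys several things: it bypasses weak compactness, lower semicontinuity, the regularity step, and any density of smooth competitors, and it yields strict minimality and uniqueness of the minimizer in one stroke (equality in the excess forces $\dot H=\Psi(s,H)$ a.e., hence $H=H_{\lambda_*}$). Its cost is that the pencil must be upgraded to a genuine field, and the obstacles you flag at the end are real but surmountable with the paper's own ingredients: strict monotonicity of $\lambda\mapsto H_\lambda(s)$ follows from the comparison $F_{\lambda_1}<F_{\lambda_2}$ (Proposition~\ref{kprop}); for the $C^1$ slope, note that $w=\partial_\lambda F_\lambda$ solves the linearization $w'=G_y(t,F_\lambda)\,w$ of \eqref{aux} with $w(\lambda)=1-G(\lambda,\lambda)>1$, hence $w>0$ everywhere, and then $\partial_\lambda H_\lambda(s)>0$ for $s>r$ by variation of constants in the linearization of \eqref{first}; the sweep of $(r_*,\infty)$ follows as in the paper's existence proof as $\lambda\to\infty$, and from $H_\lambda(s)-r_*\le (s-r)F_\lambda(r_*/s)\to 0$ as $\lambda\to 0$. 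Two further points you should make explicit: the value function $S$ need only be continuous at the apex $(r,r_*)$, where all field extremals meet (integrate over $[r+\epsilon,R]$ and let $\epsilon\to0$), and finite total energy does force each competitor's profile to be absolutely continuous with $\dot H>0$ a.e. (a singular or flat part of $H$ would put $h$ outside $\mathscr{W}^{1,n}$ or make the distortion integrand infinite), so the excess inequality integrates legitimately. With these filled in, both proofs rest on the same two pillars, convexity of $\mathcal{L}$ in $\dot H$ and the ODE analysis of \eqref{aux}, but yours uses them as a calibration while the paper's uses them as compactness plus uniqueness of the stationary point.
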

\begin{proof}
Since we find the stationary point, which is unique, we only need to
show that the given energy integral attains its minimum. First, we
show that the function
\[\begin{split}\mathcal{L}[s,H,\dot H]&=as^{n-1} \left[\frac{(n-1) H^2}{s^2}+\dot H^2\right]^{\frac{n}{2}}\\&+bH^{n-1}
 \left[\frac{(n-1) s^2}{H^2}+\frac{1}{\dot H^2}\right]^{\frac{n}{2}} \dot H\end{split}\]
is convex in $\dot H$.

For $K=\dot H$ we have the following
 \[\begin{split}\partial_K\mathcal{L}[s,H,K]&=a K n \left[K^2+\frac{H^2 (n-1)}{s^2}\right]^{\frac{1}{2} (n-2)} s^{n-1}\\&-\frac{b H^{n-1} n
  \left[\frac{1}{K^2}+\frac{(n-1) s^2}{H^2}\right]^{\frac{1}{2} (n-2)}}{K^2}+b H^{n-1} \left[\frac{1}{K^2}+\frac{(n-1) s^2}{H^2}\right]^{\frac{n}{2}} \end{split}\]

and \[\begin{split}\partial^2_{KK}\mathcal{L}[s,H,K]&=(n-1) n
\left(K^2 s^2+H^2\right)\\&\times
\left\{a\frac{\left[H^2+\frac{(n-1)
s^2{K^2}}{H^2}\right]^\frac{n-4}{2} H}{K^{1+n}}+b{s
 \left[s^2K^2+{(n-1) H^2}\right]^\frac{n-4}{2}}\right\}\end{split}\] which is clearly positive. Further we can find a positive constant $C$ so that
\begin{equation}\label{bound1}C\left(|\dot H|^n+\frac{1}{|\dot H|^{n-1}}\right)\le \mathcal{L}[s,H,\dot H],\end{equation} which implies that the function $L$ is coercive.

  Let $h_m(x)=H_m[|x|]x/|x|$ be a sequence of smooth mappings  with $H_m(r)=r_\ast$, $H_m(R)=R_\ast$ and $$\inf_{h\in \mathcal{P}(A,A_*)}
  \mathcal{E}[h]=\lim_{m\to \infty}\mathcal{E}[h_m].$$ Then $H_m$ are diffeomorphisms because $\dot H_m\neq 0$. Then up to a subsequence it
   converges to a monotone increasing function $H_\circ$.  Moreover,  since $H_m$ is a bounded sequence of $\mathscr{W}^{1,n}$, it converges,
   up to a subsequence weakly to a mapping $H_\circ \in \mathscr{W}^{1,n}$.

 By using the mentioned convexity of $\mathcal{L}$  and the fact that $\mathcal{L}$ is coercive, by standard theorem from the calculus of
 variation (as in the proof of \cite[Theorem~1.2]{arma} (cf. \cite[Theorem~63]{gomes})),  we obtain that $$\mathcal{E}[h_\circ]= \lim_{m\to \infty}\mathcal{E}[h_m].$$

 Further as $\mathcal{L}[s,H,K]\in C^\infty(\mathbf{R}_+^3)$, with $\partial^2_{KK}\mathcal{L}[s,H,K]>0$, we infer that
 $H_\circ\in C^\infty[r,R]$ (see \cite[p.~17]{jost}) and $H_\circ$ is the solution of our Euler-Lagrange equation. Thus it coincides with $H_{\lambda_\ast}$.
\end{proof}

\end{document}